\newtheorem{theorem}{Theorem}[section]{\bf}{\it}
\newtheorem{lemma}[theorem]{Lemma}{\bf}{\it}
{\bf}{\it}
\newtheorem{corollary}[theorem]{Corollary}{\bf}{\it}
{\bf}{\it} 
{\bf}{\it}
\newtheorem*{theorem*}{Theorem}
\newtheorem*{namedtheorem}{\theoremname}
\newcommand{\theoremname}{testing}
\theoremstyle{remark}
\newtheorem{remark}{Remark}
\theoremstyle{definition}
\theoremstyle{remark}
\numberwithin{equation}{section}
\newcommand{\R}{\mathbb R}
\newcommand{\N}{\mathbb N}
\newcommand{\loc}{{\operatorname{loc}}}
\newdimen\vintkern\vintkern11pt
\def\vint{-\kern-\vintkern\int}
\newcommand{\vol}{\mathrm{vol}}
\newcommand{\Cone}{\mathrm{Cone}}
\newcommand{\bG}{\mathbb{G}}
\newcommand{\G}{\mathbb{G}}
\begin{document}

\title[BLD-ellipticity implies Abelian fundamental group]{Closed BLD-elliptic manifolds have virtually Abelian fundamental groups}
\date{November 13, 2013}
\author{Enrico Le Donne \and Pekka Pankka}
\address{Department of Mathematics and Statistics, P.O. Box 35, FI-40014 University of Jyv\"askyl\"a, Finland}
\email{\{enrico.ledonne,pekka.pankka\}@jyu.fi}
 

\begin{abstract}
We show that a closed, connected, oriented, Riemannian $n$-manifold, admitting a branched cover of bounded length distortion from $\R^n$, has a virtually Abelian fundamental group.
\end{abstract}

\subjclass[2000]{30C65 (57R19)}

\maketitle

\section{Introduction}
A continuous, discrete and open map $M\to N$ between $n$-manifolds is called a \emph{branched cover}. A branched cover $f\colon M\to N$ between Riemannian $n$-manifolds has  \emph{bounded length distortion} if $f$ is \emph{bilipschitz on paths}, that is, there exists a constant $L\ge 1$ for which
\[
L^{-1}\,\ell(\gamma) \le \ell(f\circ \gamma) \le  L \,\ell(\gamma)
\]
for all paths $\gamma$ in $M$, where $\ell(\cdot)$ is the length of a path.
We call branched covers of bounded length distortion just {\em BLD-maps}. 
Recall that in case $M$ is complete and $N$ is connected, any BLD-map is surjective.  

A BLD-map, as defined here, between oriented Riemannian manifolds is either orientation preserving or orientation reversing by \v{C}ernavski\u{\i}--V\"ais\"al\"a theorem; see \cite{VaisalaJ:Disopm}. 
Orientation preserving BLD-maps were first considered by Martio and V\"ais\"al\"a in \cite{MartioO:Ellemb} as a strict subclass of quasiregular maps and the metric theory of BLD-maps was developed in detail by Heinonen and Rickman in \cite{HeinonenJ:Geobcg}. 
Recall that a continuous map $f\colon M\to N$ between oriented Riemannian $n$-manifolds is \emph{quasiregular} if $f$ belongs to the local Sobolev space $W^{1,n}_\loc(M,N)$ and satisfies the \emph{quasiconformality condition}, that is, there exists $K\ge 1$ for which 
\[
|Df|^n \le K J_f,\ \text{ almost everywhere.}
\]
Here $|Df|$ is the operator norm of the differential $Df$ of $f$ and $J_f$ the Jacobian determinant. A connected, oriented, and Riemannian $n$-manifold $N$ is \emph{quasiregularly elliptic} if there exists a non-constant quasiregular map $\R^n \to N$. 

In this note, we consider fundamental groups of closed {\em BLD-elliptic manifolds}, that is, closed, connected and Riemannian $n$-manifolds $N$, for $n\ge 2$, which admit a BLD-map $\R^n \to N$. By a theorem of Varopoulos \cite[pp. 146-147]{VaropoulosN:Anagog}, the fundamental group of a closed quasiregularly elliptic $n$-manifold has growth of polynomial order at most $n$. By Gromov's theorem on groups of polynomial growth, the fundamental group is therefore virtually nilpotent. It is a natural question whether we can say more on the structure of such a  fundamental group.

In \cite{LP}, Luisto and the second-named author showed that closed quasiregularly elliptic manifolds having maximal order of growth have virtually Abelian fundamental group. For BLD-elliptic manifolds, no such additional condition is needed.

\begin{theorem}
\label{thm:main}
Let $N$ be a closed, connected and Riemannian $n$-manifold admitting a BLD-map $\R^n \to N$. Then $\pi_1(N)$ is virtually Abelian.
\end{theorem}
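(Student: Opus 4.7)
The strategy is a two-stage reduction: use Varopoulos's polynomial-growth bound and Gromov's theorem to reduce to the nilpotent case, and then exploit the BLD hypothesis to force nilpotence to collapse to Abelianness at the asymptotic-cone level.

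\emph{Step 1 (Reduction to the torsion-free nilpotent case).} By Varopoulos's theorem the group $\pi_1(N)$ has polynomial growth of degree at most $n$, and hence by Gromov's theorem it is virtually nilpotent. Since $\R^n$ is simply connected, $f$ lifts to a BLD map into any finite cover of $N$ with the same constants (coverings being local isometries), so we may assume $\pi_1(N)$ is torsion-free nilpotent. Let $G$ be its Malcev closure and $G_\infty$ the associated graded Carnot group, so that by Pansu's theorem $G_\infty$ is the asymptotic cone of both $\pi_1(N)$ and the universal cover $\widetilde N$. Since $\pi_1(N)$ is (virtually) Abelian iff $G$ is Abelian iff $G_\infty$ is Abelian, it suffices to prove the latter.

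\emph{Step 2 (BLD rigidity in the asymptotic cone).} Lift $f$ to a BLD map $\widetilde f\colon\R^n\to\widetilde N$. The key local consequence of BLD is that, outside its branch set (which is closed and of topological codimension at least two by \v{C}ernavski\u{\i}--V\"ais\"al\"a), $\widetilde f$ is a local bilipschitz homeomorphism with constant $L$. The key global consequence is uniform path lifting: every path in $\widetilde N$ lifts through $\widetilde f$ with length distortion at most $L$, and preimages of any point are coarsely dense in $\R^n$. Rescaling $\widetilde f$ by $1/k$ and passing to an ultralimit—after choosing basepoints that track the escape-to-infinity directions of the cocompact $\pi_1(N)$-action on $\widetilde N$—produces a limit map $F\colon\R^n\to G_\infty$ which is $L$-Lipschitz, and whose behavior on ``escaping'' directions in $\R^n$ inherits the bilipschitz-on-paths property. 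The aim is to conclude that $F$ bilipschitzly realizes an open subset of $G_\infty$ as an open image of $\R^n$. Once this is achieved, bilipschitz invariance of topological and Hausdorff dimensions forces the homogeneous dimension $Q$ and the topological dimension $d$ of $G_\infty$ to coincide; by the standard structure theorem for Carnot groups, $Q=d$ exactly in the Abelian (step-one) case. Hence $G_\infty$ is Abelian, and the theorem follows.

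\emph{Main obstacle.} The genuine difficulty is the construction of the bilipschitz chart in $G_\infty$, i.e.\ controlling the passage to the asymptotic cone. In general, ``compact-fiber'' directions of $\widetilde f$ (directions of $\R^n$ that $\widetilde f$ winds into bounded portions of $\widetilde N$, such as those covering an $S^2$-factor in $\widetilde N$) collapse in the rescaling limit and destroy the bilipschitz-on-paths property of $F$; the basepoint choice in the ultralimit must be coordinated with the cocompact $\pi_1(N)$-action so that only the noncompact escape directions survive. A natural implementation of this is to extract from $\widetilde f$ a bilipschitz quasi-action of (a finite-index subgroup of) $\pi_1(N)$ on $\R^n$ via a BLD-monodromy construction, and then invoke quasi-isometric rigidity of Euclidean space (the asymptotic cone of any group quasi-isometric to $\R^n$ is Abelian). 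It is precisely the BLD hypothesis—strictly stronger than the quasiregular one—that provides the bilipschitz-on-paths control needed uniformly at every scale, and that allows this theorem to dispense with the maximal-growth assumption of Luisto--Pankka.
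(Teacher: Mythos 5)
Your Step 1 is sound and coincides with the reductions the paper itself uses (polynomial growth bound, Gromov's theorem, Malcev closure, and the equivalence ``$\pi_1(N)$ virtually Abelian iff the asymptotic cone $G_\infty$ is Abelian''). The gap is in Step 2, and it is not merely that you leave the ``main obstacle'' unresolved: the intermediate goal you set there is false in general. You aim to show that the rescaled limit $F\colon\R^n\to G_\infty$ restricts to a bilipschitz homeomorphism from an open subset of $\R^n$ onto an open subset of $G_\infty$, and then compare topological and Hausdorff dimensions. But the topological dimension of $G_\infty$ can be strictly smaller than $n$: the Zorich-type folding map $(x,y,z)\mapsto(h(x,y),z)$ is a BLD map $\R^3\to S^2\times S^1$, for which $\pi_1=\Z$ and $G_\infty=\R$, and no open subset of $\R^3$ maps bilipschitzly onto an open subset of $\R$. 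Hence no choice of basepoints or ``BLD-monodromy'' can produce the chart you want, and the appeal to quasi-isometric rigidity of $\R^n$ fails for the same reason: $\pi_1(N)$ need not be quasi-isometric to $\R^n$. The collapsing of ``compact-fiber directions'' that you flag is not an artifact to be engineered away; it genuinely occurs, and it is exactly why the BLD condition itself does not pass to these limits.

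The property that does survive both the blow-down and the blow-up is the one-sided co-Lipschitz (Lipschitz quotient) condition $B(f(x),r/L)\subset f(B(x,r))$, which BLD maps satisfy by path lifting. This is the paper's route: the lift $\tilde f\colon\R^n\to\tilde N$ is a Lipschitz quotient; Lipschitz quotients pass to ultralimits quantitatively, giving a Lipschitz quotient $F\colon\R^n\to\G$ onto the Carnot group $\G=\Cone(\pi_1(N))$; and Pansu differentiation of $F$ at a generic point yields a group homomorphism $\R^n\to\G$ that is again an $L$-Lipschitz quotient (uniform limits of $L$-Lipschitz quotients are $L$-Lipschitz quotients), hence surjective. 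A surjective homomorphic image of $\R^n$ is Abelian, so $\G$ is Abelian; no injectivity, openness of charts, or dimension count is needed. To repair your argument you would have to replace ``bilipschitz chart'' by ``surjective Lipschitz quotient'' throughout, at which point you have reconstructed the paper's proof.
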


The proof is based on the reinterpretation of BLD-maps as Lipschitz quotients introduced by Bates, Johnson, Lindenstrauss, Preiss, and Schechtmann in \cite{BatesS:AffaLf}. A map $f\colon X\to Y$ between metric spaces is an \emph{$L$-Lipschitz quotient} for $L\ge 1$ if 
\[
B_Y(f(x),r/L) \subset f(B_X(x,r)) \subset B_Y(f(x),Lr)
\]
for all $x\in X$ and $r>0$, where $B_X(x,s)$ and $B_Y(y, t)$ are metric balls about $x\in X$ and $y\in Y$ and of radii $s>0$ and $t>0$ in $X$ and $Y$, respectively. A standard path-lifting argument shows that BLD-maps are Lipschitz quotients; see e.g.\;\cite[Proposition 4.13]{HeinonenJ:Geobcg}. The converse, whether Lipschitz quotients between Riemannian $n$-manifolds are BLD-maps, is true for $n=2$ and is an intriguing open question in geometric mapping theory in higher dimensions, see \cite[Section 4]{BatesS:AffaLf} or \cite[Section 4]{HeinonenJ:Geobcg}. 

Our method gives, in fact, a version of Theorem \ref{thm:main} for Lipschitz quotients.
\begin{theorem}
\label{thm:LQ_main}
Let $n,m\in \N$.
Let $N$ be a closed, connected, and Riemannian $m$-manifold admitting a  Lipschitz quotient  $\R^n \to N$. Then $\pi_1(N)$ is virtually Abelian and has polynomial order of growth at most $n$.
\end{theorem}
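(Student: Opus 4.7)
The plan is to combine a packing estimate giving polynomial growth of $\pi_1(N)$ with a blow-down argument that identifies the asymptotic cone of $\pi_1(N)$ as a Carnot group, and then uses Pansu's differentiability theorem to force this Carnot group to be Abelian.

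Since $\R^n$ is simply connected, $f$ lifts through the universal cover $\pi\colon\tilde N\to N$ to $\tilde f\colon\R^n\to\tilde N$. As $\pi$ is a local isometry and $\tilde N$ is a complete Riemannian manifold, $\tilde f$ inherits the $L$-Lipschitz quotient property. Fix $x_0\in\R^n$ and $\tilde y_0=\tilde f(x_0)$. For each $\gamma\in\pi_1(N)$ with $d_{\tilde N}(\tilde y_0,\gamma\tilde y_0)\le R$, the co-Lipschitz inclusion $\tilde f(B_{\R^n}(x_0,LR))\supset B_{\tilde N}(\tilde y_0,R)$ produces a preimage $x_\gamma\in B_{\R^n}(x_0,LR)$ of $\gamma\tilde y_0$, and the Lipschitz upper bound gives
\[
|x_\gamma-x_{\gamma'}|\ge L^{-1}d_{\tilde N}(\gamma\tilde y_0,\gamma'\tilde y_0)\ge L^{-1}\delta
\]
for $\gamma\ne\gamma'$, where $\delta=\min_{\eta\ne e}d_{\tilde N}(\tilde y_0,\eta\tilde y_0)>0$ by the discreteness of the free, properly discontinuous $\pi_1(N)$-action. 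A volume packing count in the Euclidean ball $B_{\R^n}(x_0,LR)$ then yields $\#\{\gamma:d_{\tilde N}(\tilde y_0,\gamma\tilde y_0)\le R\}\le CR^n$. Since $\pi_1(N)$ acts cocompactly by isometries on $\tilde N$, the Milnor--Schwarz lemma translates this into polynomial word-metric growth of order at most $n$, so by Gromov's theorem $\pi_1(N)$ is virtually nilpotent.

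To upgrade ``virtually nilpotent'' to ``virtually Abelian'', pass to the asymptotic cone. By Pansu's theorem, the asymptotic cone $\G$ of $\pi_1(N)$ is a Carnot group; by Milnor--Schwarz the asymptotic cone of $\tilde N$ is bi-Lipschitz equivalent to $\G$. The Lipschitz quotient property is scale invariant, so rescaling $\tilde f$ by a divergent sequence of scales and taking an ultralimit, followed by the bi-Lipschitz identification of the two cones, yields a Lipschitz quotient $f_\omega\colon\R^n\to\G$. By Pansu's differentiability theorem, at almost every $x\in\R^n$ the map $f_\omega$ has a Pansu derivative $Df_\omega(x)\colon\R^n\to\G$ which is a graded Lie-group homomorphism; since $\R^n$ consists only of its first grading layer, the image of $Df_\omega(x)$ lies in the first layer $V_1\subset\G$. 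On the other hand, $Df_\omega(x)$ arises as a locally uniform limit of rescalings of $f_\omega$, each a Lipschitz quotient with the same constant, so $Df_\omega(x)$ is itself a Lipschitz quotient and hence surjective onto $\G$. Therefore $V_1=\G$, i.e., $\G$ is Abelian. The asymptotic cone of a finitely generated virtually nilpotent group is Abelian if and only if the group is virtually Abelian (all higher layers of the associated graded Lie algebra must vanish, forcing the commutator subgroup of a finite-index nilpotent subgroup to be finite), so $\pi_1(N)$ is virtually Abelian.

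The main obstacle is executing this blow-down carefully: one must verify that the ultralimit of rescaled copies of $\tilde f$ produces a genuine Lipschitz quotient onto $\G$, and that the Pansu derivative inherits both the Lipschitz and the co-Lipschitz constants. Both points are quantitative consequences of the scale-invariant definition of Lipschitz quotient together with local-uniform convergence in Pansu's theorem, but the bookkeeping around base points, scales, and the bi-Lipschitz identification between the metric ultralimit of $\tilde N$ and Pansu's intrinsic Carnot structure on $\G$ demands care.
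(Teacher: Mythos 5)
Your proposal is correct and follows essentially the same route as the paper: lift $f$ to the universal cover as a Lipschitz quotient, pass to asymptotic cones to obtain a Lipschitz quotient $\R^n\to\Cone(\pi_1(N))$, identify the cone as a Carnot group via Pansu's theorem on nilpotent groups, and use Pansu differentiability plus the fact that limits of $L$-Lipschitz quotients are $L$-Lipschitz quotients to produce a surjective homomorphism from $\R^n$, forcing the cone (and hence, virtually, the group) to be Abelian. The only deviation is in the polynomial-growth step, where you use a packing argument with $\delta$-separated orbit preimages instead of the paper's Jacobian volume estimate $\vol(B_{\tilde N}(\tilde f(x),r))\le\int_{B^n(x,Lr)}J_f$; both are valid, and yours has the minor advantage of working transparently when $m<n$.
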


Note that in the last theorem we are not assuming anymore that the dimension of $\R^n$ is the same of the dimension of $N$.
Our method is based on ultralimits. Due to a possible change of dimension, the BLD-condition does not pass to ultralimits. However, the ultralimits of Lipschitz quotients are Lipschitz quotients. Thus, by considering a blow-down of the universal cover and the fundamental group, we find a Lipschitz quotient map from $\R^n$ to a Carnot group $\bG$ which is the asymptotic cone of $\pi_1(N)$. Then, by passing to a tangent i.e.\;by considering a blow-up, we find a surjective group homomorphism $\R^n \to \bG$ by Pansu's differentiability theorem. Thus $\bG$, which is the graded algebra of the Malcev closure of $\pi_1(N)$, is Abelian. We then have that   $\pi_1(N)$ is virtually Abelian.

Theorem \ref{thm:LQ_main} is in connection to a question of Gromov \cite[Question 2.44]{Gromov-book} whether fundamental groups of elliptic manifolds are virtually Abelian. A Riemannian $n$-manifold $M$ is called \emph{elliptic} if there exists a Lipschitz map $\R^n \to M$ of \emph{non-zero asymptotic degree}, that is, a map $f\colon \R^n \to M$ satisfying
\[
\limsup_{r\to\infty} \frac{1}{r^n} \int_{B^n(r)}J_f > 0.
\]
By \cite[Corollary 2.43]{Gromov-book}, a closed and aspherical elliptic manifold has a virtually Abelian fundamental group. Since Lipschitz quotient maps are maps of non-zero asymptotic degree, the topological assumption on asphericality in this result can be replaced by a slightly stronger geometric assumption that the manifold admits a Lipschitz quotient map from $\R^n$.

\section{Lipschitz quotients and volume growth}

As mentioned in the introduction, Varopoulos' theorem for quasiregular maps states that the fundamental group of a closed quasiregularly elliptic manifold has growth of polynomial order at most $n$; for open quasiregularly elliptic manifolds, see \cite[Theorem 1.3]{PR}.
 
In this section, we prove an analogous result for closed, connected Riemannian manifolds admitting a Lipschitz quotient map from $\R^n$. We begin with a lifting lemma for Lipschitz quotients, which will also be useful later in the paper. 

\begin{lemma}
\label{lemma:LQ_lift}
Let $N$ be a closed, connected, and Riemannian $m$-manifold for $m\le n$, let $f\colon \R^n \to N$ be an $L$-Lipschitz quotient, and $\tilde f\colon \R^n\to N$ a lift of $f$ to the universal cover $\tilde N$ of $N$. Then $\tilde f$ is an $L$-Lipschitz quotient.
\end{lemma}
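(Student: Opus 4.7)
The plan is to verify the two defining inclusions of the $L$-Lipschitz quotient property separately, exploiting that the universal covering projection $p\colon \tilde N \to N$ is a Riemannian covering, hence a local isometry preserving lengths of paths. For the outer inclusion $\tilde f(B(x,r)) \subset B_{\tilde N}(\tilde f(x), Lr)$ --- equivalently, that $\tilde f$ is $L$-Lipschitz --- I would pick $x, y \in \R^n$, let $\sigma$ be the Euclidean segment from $x$ to $y$, and use the relation $p \circ \tilde f \circ \sigma = f \circ \sigma$ together with the length-preservation of $p$ to obtain
\[
d_{\tilde N}(\tilde f(x), \tilde f(y)) \le \ell(\tilde f \circ \sigma) = \ell(f \circ \sigma) \le L\, |x-y|,
\]
where the final inequality uses that $f$ itself is $L$-Lipschitz, a direct consequence of the quotient hypothesis.

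For the inner inclusion $B_{\tilde N}(\tilde f(x), r/L) \subset \tilde f(B(x,r))$, the key input is the minimum displacement
\[
\rho := \inf\{ d_{\tilde N}(\tilde z, \tau \tilde z) : \tilde z \in \tilde N,\ \tau \in \pi_1(N) \setminus \{1\} \}
\]
of nontrivial deck transformations; since $N$ is compact and the $\pi_1(N)$-action on $\tilde N$ is free and properly discontinuous, $\rho > 0$, and consequently any two distinct points of a fiber of $p$ are at distance at least $\rho$ in $\tilde N$. Given $\tilde y \in B_{\tilde N}(\tilde f(x), r/L)$, I would take (using that $\tilde N$ is complete and invoking Hopf--Rinow) a geodesic $\tilde \alpha\colon [0,1] \to \tilde N$ from $\tilde f(x)$ to $\tilde y$ with $\ell(\tilde \alpha) < r/L$, and choose a partition $0 = t_0 < \cdots < t_k = 1$ so that each subarc $\tilde \alpha|_{[t_{i-1}, t_i]}$ has length less than $\rho / (L^2 + 1)$. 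Writing $\tilde y_i := \tilde \alpha(t_i)$ and $y_i := p(\tilde y_i)$, I would then inductively build points $z_0 = x, z_1, \ldots, z_k \in \R^n$ with $\tilde f(z_i) = \tilde y_i$.

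At step $i$, the Lipschitz quotient property of $f$ furnishes $z_{i+1} \in B(z_i, L\, d_N(y_i, y_{i+1}))$ with $f(z_{i+1}) = y_{i+1}$. Then $\tilde f(z_{i+1})$ and $\tilde y_{i+1}$ both lie in the fiber $p^{-1}(y_{i+1})$, and combining the triangle inequality with the $L$-Lipschitz property of $\tilde f$ established above yields
\[
d_{\tilde N}(\tilde f(z_{i+1}), \tilde y_{i+1}) \le L\, d_{\R^n}(z_{i+1}, z_i) + d_{\tilde N}(\tilde y_i, \tilde y_{i+1}) \le (L^2 + 1)\, d_{\tilde N}(\tilde y_i, \tilde y_{i+1}) < \rho,
\]
which forces the equality $\tilde f(z_{i+1}) = \tilde y_{i+1}$. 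Summing displacements, $d_{\R^n}(x, z_k) \le L \sum_i d_N(y_{i-1}, y_i) \le L\, \ell(\tilde \alpha) < r$, so $z_k \in B(x,r)$ with $\tilde f(z_k) = \tilde y$. I expect the main technical point to be exactly this synchronization step: ensuring the discrete preimages $z_i$ track the \emph{correct} lift $\tilde y_i$ of $y_i$ rather than some other point in the same fiber. It is the positive separation $\rho$, available thanks to the compactness of $N$, that makes the induction close.
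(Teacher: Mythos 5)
Your proof is correct and follows essentially the same route as the paper's: a discrete path-lifting argument along a geodesic from $\tilde f(x)$ to the target point in $\tilde N$, using the Lipschitz quotient property of $f$ downstairs to produce successive preimages and the compactness of $N$ to supply a uniform scale (your minimum displacement $\rho$, the paper's radius $\delta$ of isometrically covered balls) at which the chain is forced to track the correct lift. If anything, your inductive synchronization step is spelled out more explicitly than in the paper's sketch.
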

\begin{proof}
Let $\pi\colon \tilde N\to N$ be a locally isometric covering map. Since $M$ is closed, we may fix $\delta>0$ so that $\pi|B_{\tilde N}(y,\delta)\colon B_{\tilde N}(y,\delta)\to B_N(\pi(y),\delta)$ is an isometry for every $y\in \tilde N$.

Since $\tilde f$ is obviously $L$-Lipschitz, it suffices to show that $B_{\tilde N}(\tilde f(x),r/L)\subset \tilde f\left( B^n(x,r)\right)$ for each ball $B^n(x,r)$ in $\R^n$. Let $y\in B_{\tilde N}(\tilde f(x),r/L)$ and $\varepsilon\in (0,\delta)$ so that $B_{\tilde y}(y,\varepsilon)\subset B_{\tilde N}(\tilde f(x),r/L)$. 

Let $[\tilde f(x),y]$ be a geodesic in $\tilde N$ from $\tilde f(x)$ to $y$ and let $B_1,\ldots, B_k$ be a sequence of balls, where $k>1/\varepsilon$, so that $B_i=B_{\tilde N}(z_i,\varepsilon)$, where $z_i\in [\tilde f(x),y]$ and $|z_{i-1}-z_i|=|\tilde f(x)-y|/k$ for each $2\le i \le k$; we may take $z_1=\tilde f(x)$ and $z_k=y$. 

Since $f$ is a Lipschitz quotient, there exists balls $B'_i = B^n(x_i,L\varepsilon)$ in $\R^n$, for $i=1,\ldots, k$, so that $x_0=x$, $B'_{i-1}\cap B'_i\ne \emptyset$, and $fB'_i \supset \pi B_i$ for each $i$. Then $y\in \tilde fB'_k$. Since $B'_k \subset B^n(x,r+L\varepsilon)$, the claim follows.
\end{proof}

\begin{corollary}
Let $N$ be a closed, connected, and Riemannian manifold admitting a Lipschitz quotient map from $\R^n$. Then the polynomial order of growth of $\pi_1(N)$ is at most $n$. In particular, $\pi_1(N)$ is virtually nilpotent.
\end{corollary}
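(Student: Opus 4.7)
The plan is to combine the lifting lemma with a volume growth estimate on the universal cover, and then conclude via the \v{S}varc--Milnor orbit counting argument together with Gromov's polynomial growth theorem.

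First, I would apply Lemma~\ref{lemma:LQ_lift} to lift $f$ to an $L$-Lipschitz quotient $\tilde f\colon \R^n\to\tilde N$ onto the Riemannian universal cover $\tilde N$ of $N$. Setting $x_0=\tilde f(0)$, the Lipschitz quotient inclusion with $r=LR$ yields
\[
B_{\tilde N}(x_0,R)\;\subset\;\tilde f\bigl(B^n(0,LR)\bigr),
\]
so $\vol\bigl(B_{\tilde N}(x_0,R)\bigr)$ is controlled by the $m$-dimensional volume of the image of a Euclidean ball of radius $LR$ under an $L$-Lipschitz map.

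The key quantitative step is then a covering estimate. Cover $B^n(0,LR)$ by $O(R^n)$ unit balls; each image lies in a ball of radius $L$ in $\tilde N$, and since $\tilde N$ is a Riemannian cover of the compact manifold $N$, it has uniformly bounded local geometry, so any ball of radius $L$ has $m$-volume at most a constant depending only on $L$ and $N$. Summing gives
\[
\vol\bigl(B_{\tilde N}(x_0,R)\bigr)\;\le\;C R^n
\]
for all $R\ge 1$, with $C$ independent of $R$.

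Finally, fix a precompact fundamental domain $D\subset\tilde N$ for the $\pi_1(N)$-action, of diameter $d$ and positive $m$-volume $v_0$. For every $\gamma\in\pi_1(N)$ with $d_{\tilde N}(\gamma x_0, x_0)\le R$, the translate $\gamma D$ lies inside $B_{\tilde N}(x_0,R+d)$, and distinct translates are essentially disjoint. Hence the cardinality of such $\gamma$ is at most $\vol\bigl(B_{\tilde N}(x_0,R+d)\bigr)/v_0\le C'R^n$. By \v{S}varc--Milnor, $\pi_1(N)$ is finitely generated and word-length growth is comparable to orbital growth, so $\pi_1(N)$ has polynomial growth of order at most $n$, and Gromov's theorem then yields virtual nilpotence.

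The main subtlety lies in the volume estimate: since we allow $m<n$, the area formula is unavailable, so one must instead exploit the $L$-Lipschitz property of $\tilde f$ together with the bounded local geometry of $\tilde N$ (inherited from compactness of $N$) via the covering argument above. In the equidimensional case $m=n$ this step collapses to a direct invocation of the area formula.
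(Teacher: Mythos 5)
Your proposal is correct, and its overall architecture coincides with the paper's: lift $f$ to an $L$-Lipschitz quotient $\tilde f\colon\R^n\to\tilde N$ via Lemma~\ref{lemma:LQ_lift}, use the co-Lipschitz inclusion $B_{\tilde N}(\tilde f(0),R)\subset\tilde f(B^n(0,LR))$ to bound the volume growth of the universal cover by $CR^n$, and conclude with the quasi-isometry between $\tilde N$ and $\pi_1(N)$ plus Gromov's theorem. The one genuine difference is the key quantitative step. The paper bounds $\vol_{\tilde N}(\tilde f(B^n(x,Lr)))$ by $\int_{B^n(x,Lr)}J_{\tilde f}=\int_{B^n(x,Lr)}J_f\le L^{2n}\vol(B^n(0,1))r^n$, i.e.\ it invokes the area formula and the pointwise bound $J_f\le L^n$; this is the cleanest route in the equidimensional case. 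You instead cover $B^n(0,LR)$ by $O(R^n)$ unit balls, note each image sits in a ball of radius $L$ in $\tilde N$, and use the uniformly bounded geometry of $\tilde N$ (inherited from compactness of $N$) to bound each such ball's volume by a constant. Your version is more elementary and, as you correctly observe, is the more robust one when $\dim N=m<n$, where the naive area-formula bound degenerates (the $m$-dimensional Hausdorff measure of $B^n(0,LR)$ is infinite, so one cannot simply push forward Lebesgue measure); since the corollary and Lemma~\ref{lemma:LQ_lift} are stated for $m\le n$, this is a real point in favour of your argument. Your explicit fundamental-domain packing step is just an unpacking of the paper's appeal to the quasi-isometry $\pi_1(N)\sim\tilde N$ and is standard.
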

\begin{proof}
The claim follows directly from the volume estimate for balls in the Riemannian universal cover $\tilde N$ and Gromov's theorem. Indeed, let $f\colon \R^n \to N$ be an $L$-Lipschitz quotient.
Let $\tilde f\colon \R^n \to \tilde N$ be a lift of $f$ to the universal cover. Since the covering map $\tilde N \to N$ is a local isometry and $\tilde f$ is $L$-Lipschitz quotient,
\begin{eqnarray*}
\vol_{\tilde N} \left(B_{\tilde N}(\tilde f(x),r)\right) &\le& \vol_{\tilde N} \left(\tilde f (B^n(x,Lr)) \right) \\
&\le& \int_{B^n(x,Lr)} J_{\tilde f} \\ 
&=& \int_{B^n(x,Lr)} J_f \le L^{2n} \vol_{\R^n}(B^n(0,1)) r^n
\end{eqnarray*}
for all $x\in \R^n$ and $r>0$. Since $\pi_1(N)$ and $\tilde N$ are quasi-isometric, $\pi_1(N)$ has polynomial growth of order at most $n$. The claim follows.
\end{proof}

\section{Lipschitz quotients and ultralimits.}

We refer the reader who is not used to the following notions of nonprincipal ultrafilters and ultralimits to Chapter 9 of Kapovich's book \cite{Kapovich-book}. Roughly speaking, taking ultralimits with respect to a nonprincipal ultrafilter is a consistent way of using the axiom of choice to select an accumulation point of any bounded sequence of real numbers.
Let $\omega$  be a nonprincipal ultrafilter. Given a sequence $X_j$ of metric spaces with base points $\star_j\in X_j$, we consider the {\em based ultralimit metric space}
$$(X_\omega, \star_\omega):=(X_j, \star_j)_\omega := \lim_{j\to \omega} (X_j, \star_j).$$
We recall briefly the construction. Let 
$$X^\N_b:=\left\{ (x_j)_{j\in \N} : x_j\in  X_j , \sup\{ d(x_j, \star_j):j\in\N\}<\infty\right\}.$$
For all $ ( x_j ) _{j},  ( x'_j ) _{j}\in X^\N_b$, set
$$d_\omega ( ( x_j ) _{j},  ( x'_j ) _{j}):= \lim_{j\to \omega} d_j(x_j, x'_j),$$
where $ \lim_{j\to \omega}$ denotes  the $\omega$-limit of a sequence  indexed by $j$.
Then $X_\omega$ is the metric space obtained by taking the quotient of $(X^\N_b, d_\omega)$
by the semidistance $d_\omega$. We denote by $[x_j]$ the equivalence class of $ ( x_j ) _{j}$. The base point  $\star_\omega$ in $X_\omega$ is $[\star_j]$.

Suppose $f_j:X_j\to Y_j$ are 
maps between 
metric spaces, 
$\star_j\in X_j$
are base points,  and 
we have the property that
$  ( f_j(x_j) ) _j 
 \in Y^\N_b$,
for all 
$  ( x_j ) _j 
\in X^\N_b$.
Then the ultrafilter $\omega$ assigns a limit map
$f_\omega := 
\lim_{j\to \omega} f_j :
(X_j, \star_j)_\omega 
\to
(Y_j, f_j(\star_j))_\omega $
as
$f_\omega  ([x_j]) := [f_j(x_j)]$.

In particular, if $f_j :X_j\to Y_j $ are $L$-Lipschitz maps, then 
$\lim_{j\to \omega} f_j $ is a well-defined map  
$(X_j, \star_j)_\omega 
\to
(Y_j, f_j(\star_j))_\omega$.
Moreover, 
passing to $\omega$-limits for the inequalities
$$d_j(f_j(x_j), f_j(x'_j)) \leq  L d_j(x_j, x'_j), $$
one obtains that $f_\omega$ is $L$-Lipschitz, i.e., 
$$d_\omega(f_\omega([x_j]), f_\omega([x'_j])) \leq  L d_\omega([x_j], [x'_j]). $$

\begin{lemma}\label{proof:LQ:limit}
Lipschitz quotients pass to ultralimits as Lipschitz quotients quantitatively, that is, ultralimits of $L$-Lipschitz quotients are $L$-Lipschitz quotients for all $L\ge 1$.
\end{lemma}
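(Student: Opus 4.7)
The plan is to verify the two inclusions in the Lipschitz quotient definition for $f_\omega$. Let $f_j \colon X_j \to Y_j$ be $L$-Lipschitz quotients with base points $\star_j \in X_j$, and write $Y_\omega = (Y_j, f_j(\star_j))_\omega$. Since $f_\omega$ was already shown to be $L$-Lipschitz in the discussion preceding the lemma, the outer inclusion $f_\omega(B_{X_\omega}(x, r)) \subset B_{Y_\omega}(f_\omega(x), Lr)$ is immediate; the actual content is the reverse inclusion.

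To prove $B_{Y_\omega}(f_\omega(x), r/L) \subset f_\omega(B_{X_\omega}(x, r))$, I would fix $x = [x_j] \in X_\omega$, a radius $r > 0$, and $y = [y_j]$ with $d_\omega(y, f_\omega(x)) < r/L$, aiming to produce a preimage of $y$ in $B_{X_\omega}(x, r)$. The only technical obstacle is that ultralimits of strict inequalities a priori yield only non-strict inequalities, so one cannot directly apply the $L$-Lipschitz quotient property of $f_j$ with the radius $r$. The fix is to choose an auxiliary radius $r' \in (L \cdot d_\omega(y, f_\omega(x)), r)$, which by definition of the ultralimit guarantees $d_j(y_j, f_j(x_j)) < r'/L$ for $\omega$-almost every $j$.

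For each such $j$, the $L$-Lipschitz quotient property of $f_j$ supplies a point $x'_j \in B_{X_j}(x_j, r')$ with $f_j(x'_j) = y_j$; on the $\omega$-null complement I would simply set $x'_j = x_j$. Since $d_j(x'_j, \star_j) \le r' + d_j(x_j, \star_j)$, the sequence $(x'_j)$ lies in $X^{\N}_b$ and therefore defines an element $[x'_j] \in X_\omega$. Passing to the $\omega$-limit one gets $d_\omega([x'_j], x) \le r' < r$, so $[x'_j] \in B_{X_\omega}(x, r)$, and $d_\omega(f_\omega([x'_j]), y) = \lim_{j \to \omega} d_j(f_j(x'_j), y_j) = 0$, so $f_\omega([x'_j]) = y$. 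This yields $y \in f_\omega(B_{X_\omega}(x, r))$ and completes the proof. The only step requiring any care is the passage from the strict inequality $d_\omega(y, f_\omega(x)) < r/L$ to an honest $\omega$-almost-everywhere strict inequality via the auxiliary radius $r'$; everything else is a mechanical unwinding of definitions.
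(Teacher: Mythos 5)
Your proposal is correct and follows essentially the same route as the paper: for $\omega$-almost every $j$ you lift $y_j$ to a preimage $x'_j$ near $x_j$ using the co-Lipschitz half of the Lipschitz quotient property of $f_j$, check that $(x'_j)_j$ stays in $X^\N_b$, and pass to the ultralimit. The only (harmless) difference is bookkeeping: the paper records the pointwise inequality $d_\omega([x'_j],[x_j])\le L\,d_\omega([y_j],f_\omega([x_j]))$ directly, whereas you fix a ball of radius $r$ and insert the auxiliary radius $r'$ to handle the strict-inequality issue with open balls -- a point on which your write-up is in fact slightly more careful than the paper's.
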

\begin{proof}
Suppose  that $f_j :X_j\to Y_j $ are $L$-Lipschitz quotients, i.e., in addition to be 
$L$-Lipschitz maps, we know that 
\begin{equation}
\label{LQ:extra:condition}
B_{Y_j}(f_j(x_j),r/L) \subset f_j( B_{X_j}(x_j,r)),
\end{equation}
for all $  x_j\in X_j$ and $r>0$.
Take
$  ( x_j ) _{j\in \N} \in X^\N_b$ and $  ( y_j ) _{j\in \N} \in Y^\N_b$.
Then \eqref{LQ:extra:condition} implies that,
for all 
$j\in \N$ there exists $x'_j\in X_j$ with $f_j(x'_j)=y_j$
and
$$d(x'_j, x_j) \leq L\, d(y_j, f_j(x_j)).$$
Note that
\begin{eqnarray*}
d(x'_j, \star_j)& \leq&
d(x'_j, x_j) + d(x_j, \star_j)\\
&\leq & L\, d(y_j, f_j(x_j))+ d(x_j, \star_j)\\
&\leq &  L\,d(y_j, f_j(\star_j))+  L \,d(f_j(\star_j), f_j(x_j))+ d(x_j, \star_j)\\
&\leq &  L\, d(y_j, f_j(\star_j))+ ( L^2+1)\, d(x_j, \star_j).
\end{eqnarray*}
Thus $  ( x'_j ) _{j\in \N} \in X^\N_b$.
Also $ f_\omega ([x'_j])  = [f_j(x'_j)] = [y_j]$
and
\begin{eqnarray*}
d_\omega ( [x'_j], [x_j])  &=& \lim_{j\to \omega}  d(x'_j, x_j)\\
& \leq& L\,   \lim_{j\to \omega}  d(y_j, f_j(x_j))\\
&=& L\, d_\omega( [y_j], [f_j(x_j)] )\\
&=& L\, d_\omega ( [y_j], f_\omega( [x_j] ) ).
\end{eqnarray*}
We conclude that the ultralimit map  $f_\omega$ is an $L$-Lipschitz quotient as well.
\end{proof}

Let $X$ be a metric space with distance $d_X$.
We fix a nonprincipal ultrafilter $\omega$, a base point $\star\in X$, and a sequence of positive numbers
$\lambda_j\to 0$ as $j\to \infty$.
The {\em asymptotic cone} of $X$ is defined as
$$\Cone(X):= (\lambda_j X, \star)_\omega.$$
Here 
$\lambda_j X=(X,\lambda_j d_X) $ and $\star$ is seen as the constant sequence.

From Lemma \ref{proof:LQ:limit}, we immediately have the following result.
\begin{corollary}\label{passing:cone}
Let $f: X\to Y$ be an $L$-Lipschitz quotient between metric spaces.
Then $\Cone(f):=f_\omega: \Cone(X)\to \Cone(Y)$ is 
an $L$-Lipschitz quotient.
\end{corollary}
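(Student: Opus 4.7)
The plan is to deduce the corollary directly from Lemma~\ref{proof:LQ:limit} after observing that the asymptotic cone is a particular based ultralimit and that the $L$-Lipschitz quotient condition is preserved under simultaneous rescaling of source and target by the same positive factor.

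For each $j\in\N$, I regard the map $f$ as a map $f_j\colon \lambda_j X \to \lambda_j Y$, where $\lambda_j X = (X,\lambda_j d_X)$ and $\lambda_j Y = (Y,\lambda_j d_Y)$. Multiplying the Lipschitz inequality $d_Y(f(x),f(x'))\le L\, d_X(x,x')$ by $\lambda_j$ on both sides shows that $f_j$ is still $L$-Lipschitz, and the ball-inclusion for $f_j$ reads
\[
B_{\lambda_j Y}(f_j(x),r/L) = B_Y\bigl(f(x),r/(L\lambda_j)\bigr) \subset f\bigl(B_X(x,r/\lambda_j)\bigr) = f_j\bigl(B_{\lambda_j X}(x,r)\bigr),
\]
which follows from the corresponding inclusion for $f$ applied at radius $r/\lambda_j$. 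Hence each $f_j$ is an $L$-Lipschitz quotient between $\lambda_j X$ and $\lambda_j Y$.

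Next I apply Lemma~\ref{proof:LQ:limit} to the sequence $(f_j)_{j\in\N}$, with base points $\star_j = \star \in \lambda_j X$ and $f_j(\star_j) = f(\star) \in \lambda_j Y$. By the definition of the asymptotic cone, the resulting based ultralimits are exactly $(\lambda_j X,\star)_\omega = \Cone(X)$ and $(\lambda_j Y, f(\star))_\omega = \Cone(Y)$, while the ultralimit map $f_\omega = \lim_{j\to\omega} f_j$ coincides with $\Cone(f)$ by definition. The lemma therefore produces the desired $L$-Lipschitz quotient $\Cone(X)\to\Cone(Y)$.

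I expect no real obstacle here. The only minor item to verify is the bounded-sequence hypothesis needed to define $f_\omega$, namely that $(x_j)_j \mapsto (f_j(x_j))_j$ sends $X^\N_b$ to $Y^\N_b$; this is immediate because each $f_j$ is $L$-Lipschitz with $f_j(\star_j) = f(\star)$, so a bounded distance from $\star_j$ in $\lambda_j X$ transforms into a bounded distance from $f(\star)$ in $\lambda_j Y$.
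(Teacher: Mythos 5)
Your proposal is correct and follows exactly the paper's route: the paper also notes that the maps $f\colon \lambda_j X\to \lambda_j Y$ are set-wise the same map and remain $L$-Lipschitz quotients for every $j$, and then invokes Lemma~\ref{proof:LQ:limit}. You have simply written out the rescaling verification that the paper leaves implicit.
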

Note that $f_\omega$ above is the limit of the sequence of maps 
$ f \colon \lambda_j X \to \lambda_j Y$, which  are  
set-wise always the same map and are 
$L$-Lipschitz quotients for all $j$.



In our argument we will perform a blow down (i.e., a passing to asymptotic cones) followed by a blow up  (i.e., a passing to tangents).
In both cases we will make use of theorems by Pansu. We begin with a weaker version of Pansu's theorem on asymptotic cones sufficient for our purposes.

\begin{theorem}[Pansu, {\cite[Th\'eor\`em principal]{Pansu-croissance}}]
\label{THM:Pansu1}
Let $\Gamma$ be a nilpotent finitely generated group equipped with some word distance.
Then 
$\Cone(\Gamma)$
is a subFinsler Carnot group. In particular, $\Cone(\Gamma)$
is biLipschitz equivalent to a subRiemannian Carnot group.
\end{theorem}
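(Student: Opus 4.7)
The plan is to prove this via the Malcev completion and the associated graded Lie group. First, by Malcev's theorem, after replacing $\Gamma$ by a finite-index torsion-free subgroup (which does not affect the asymptotic cone since the inclusion is a quasi-isometry), there is a simply connected nilpotent Lie group $G$ in which $\Gamma$ embeds as a cocompact lattice. The word metric on $\Gamma$ is then quasi-isometric to any left-invariant Riemannian metric $d_G$ on $G$, so it suffices to compute $\Cone(G, d_G)$.

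Next, I would associate to $G$ its graded Lie algebra: taking the lower central series $\mathfrak{g} = \mathfrak{g}_1 \supset \mathfrak{g}_2 \supset \cdots \supset \mathfrak{g}_s \supset 0$, form $\mathfrak{g}_\infty = \bigoplus_{i=1}^s \mathfrak{g}_i/\mathfrak{g}_{i+1}$ with the induced graded bracket. Let $G_\infty$ be the associated simply connected Lie group; it is a Carnot group, and the grading yields one-parameter dilations $\delta_t$ acting on the $i$-th layer by multiplication by $t^i$. A diffeomorphism between $G$ and $G_\infty$ is provided by the exponential map together with a Malcev basis adapted to the filtration.

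The core step is to identify $\Cone(\Gamma) = \Cone(G, d_G)$ with $G_\infty$ equipped with an appropriate left-invariant subFinsler Carnot--Carath\'eodory metric. In Malcev coordinates of the second kind adapted to a basis compatible with the lower central series, Guivarc'h's classical estimate bounds the word length of a point with coordinates $(x_1, \ldots, x_n)$ by $\sum_k |x_k|^{1/w_k}$ (up to multiplicative constants), where $w_k$ is the weight of the $k$-th basis vector. Rescaling $d_G$ by $\lambda_j\to 0$ and identifying $\lambda_j G$ with $G_\infty$ via $\delta_{\lambda_j}$, the scaled unit balls converge in the ultralimit to a symmetric convex body contained in the first layer $V_1$, defining a norm $\|\cdot\|_\infty$ on $V_1$. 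The ultralimit of $(\lambda_j G, d_G)$ is then $G_\infty$ with the subFinsler Carnot--Carath\'eodory metric induced by $\|\cdot\|_\infty$: horizontal paths survive the rescaling and realize asymptotically optimal distances, while non-horizontal directions contribute negligibly because of the $1/w_k$ exponents. The main obstacle is precisely this convergence statement, which requires controlling horizontal approximations of group elements in terms of Malcev coordinates; this is the technical heart of Pansu's original argument in \cite{Pansu-croissance}.

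Finally, the \emph{in particular} statement is immediate: all norms on the finite-dimensional first layer $V_1$ are biLipschitz equivalent, so replacing $\|\cdot\|_\infty$ by any Euclidean norm on $V_1$ produces a subRiemannian Carnot group on $G_\infty$, and the biLipschitz equivalence on $V_1$ propagates, by the definition of the Carnot--Carath\'eodory distance as the infimum of horizontal lengths, to a biLipschitz equivalence of the associated left-invariant global metrics on $G_\infty$.
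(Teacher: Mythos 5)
The paper offers no proof of this statement: it is quoted directly from Pansu's article, so there is no internal argument to compare yours against. Your outline follows the standard route --- Malcev completion, passage to the associated graded Carnot group $G_\infty$, Guivarc'h's weight estimates in adapted Malcev coordinates, and convergence of the rescaled metrics to a Carnot--Carath\'eodory metric --- which is also the strategy of the alternative proof in the Breuillard--Le Donne reference mentioned in the paper's remarks.

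Two caveats. First, your reductions (replacing $\Gamma$ by a finite-index torsion-free subgroup, then replacing the word metric by a left-invariant Riemannian metric $d_G$ on the Malcev closure $G$) are only quasi-isometries, and a quasi-isometry changes the asymptotic cone by a biLipschitz homeomorphism, not an isometry. As written, your argument therefore establishes only the ``in particular'' clause --- that $\Cone(\Gamma)$ is biLipschitz equivalent to a subRiemannian Carnot group --- and not the literal first claim that $\Cone(\Gamma)$ \emph{is} a subFinsler Carnot group. To obtain the isometric statement one must carry the word metric through the whole argument; the limit norm on the horizontal layer $V_1$ is then the one whose unit ball is the convex hull of the projections of the generating set $S$ to $\mathfrak{g}/[\mathfrak{g},\mathfrak{g}]$, rather than a norm extracted from an auxiliary Riemannian metric. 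For the use made of the theorem in this paper the weaker, biLipschitz conclusion together with the identification of the group structure of the cone (Remark \ref{Abelian:IFF:Abelian}) would in fact suffice. Second, you explicitly defer the convergence of the rescaled balls to the subFinsler limit metric --- which is the entire analytic content of the theorem --- to Pansu's original argument. So what you have is an accurate and correctly organized roadmap, but the decisive step remains a citation rather than a proof.
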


\begin{remark}
\label{Abelian:IFF:Abelian}
From the section `Compl\'ement au th\'eor\`em principal' in  \cite[page 421]{Pansu-croissance},
the group structure of $\Cone(\Gamma)$ is clear:
we may assume that $\Gamma$ is a lattice in a nilpotent Lie group $G$, which is called Malcev closure of $\Gamma$.
Then $\Cone(\Gamma)$ is the graded algebra associated to $G$.
In particular, the group $\Gamma $ is virtually Abelian
if and only if
$G$ is Abelian, and 
if and only if
$\Cone(\Gamma)$ is Abelian.
\end{remark}

\begin{remark}
We refer the reader to the papers \cite{Breuillard-LeDonne1} 
for another proof of Theorem \ref{THM:Pansu1} and the construction of the graded algebra and the subFinsler structure on it.
We point out that the above theorem by Pansu is stating that the asymptotic cone does not depend on the choice of ultrafilter $\omega$ nor on the scaling factors $\lambda_j$. We actually do not need such an independence. Using the theory of locally compact groups,    one can easily prove that any such an asymptotic cone is always a subFinsler group, see \cite{GromovM:Grople, Berestovskii, LeDonne_characterization}. However, for us it will be important to know that, as explained in Remark 
\ref{Abelian:IFF:Abelian}, the asymptotic cone is Abelian 
 only if
the initial group $\Gamma $ is virtually Abelian.
\end{remark}

Regarding blow-ups, we shall use the  following  differentiability theorem. In a Carnot group $\G$, we denote by $L_p\colon \G \to \G$ the left translation by $p\in \G$, and by $\delta_h\colon \G\to \G$ the dilation by $h>0$. 
\begin{theorem}[Pansu, {\cite[Th\'eor\`eme 2]{PansuP:MetCC}}]
\label{THM:Pansu2}
 Let $\G_1$ and $ \G_2$ be two subRiemannian Carnot groups.
 If $f:\G_1\to \G_2$ is Lipschitz, then  for almost every $p\in\G_1$,
the difference quotient maps $\delta_{1/h}\circ L_{f(p)}^{-1} \circ f \circ L_p \circ \delta_h$ converge uniformly on compact sets to a group homomorphism.
\end{theorem}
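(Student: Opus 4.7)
Since Theorem \ref{THM:Pansu2} is Pansu's differentiability theorem, cited from \cite{PansuP:MetCC}, I sketch the argument as one would reconstruct it rather than reprove the full result. The plan is to realize the difference quotients $f_{p,h} := \delta_{1/h} \circ L_{f(p)}^{-1} \circ f \circ L_p \circ \delta_h$ as a uniformly Lipschitz family, extract subsequential limits by Arzelà--Ascoli, and then prove that for almost every $p$ the limit is unique and is a homogeneous group homomorphism.

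First, left invariance of the Carnot--Carathéodory distance together with the scaling $d(\delta_h(x),\delta_h(y)) = h\,d(x,y)$ shows that each $f_{p,h}$ is $L$-Lipschitz with the same constant as $f$. The family $\{f_{p,h}\}_{h>0}$ is therefore equi-Lipschitz and fixes the identity, so Arzelà--Ascoli provides, for every sequence $h_n\to 0$, a subsequence converging uniformly on compact sets to some $L$-Lipschitz map $T\colon \G_1 \to \G_2$ with $T(e)=e$. Next, for each vector $v$ in the first layer $V_1$ of the Lie algebra of $\G_1$, the curve $t\mapsto f(p\cdot \exp(tv))$ is Lipschitz into the metric space $\G_2$ and hence has a metric derivative for almost every $t$ by the metric-space version of Rademacher's theorem. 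Combining a countable dense set $\{v_k\}\subset V_1$ with Fubini on the Haar measure of $\G_1$ yields a full-measure set $A\subset \G_1$ on which, for every $k$, the blow-up $\delta_{1/h}\bigl(f(p)^{-1}\cdot f(p\cdot \exp(hv_k))\bigr)$ converges as $h\to 0$ to a well-defined element $T(v_k)\in \G_2$.

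The technical heart, and the main obstacle, is to promote this pointwise horizontal differentiability to the full conclusion. For $p\in A$, equi-Lipschitz continuity extends the assignment $v_k\mapsto T(v_k)$ to all of $V_1$, and this extension lifts uniquely to a graded Lie algebra homomorphism --- hence to a Carnot group homomorphism of $\G_1$ into $\G_2$ --- via Baker--Campbell--Hausdorff, because a stratified homomorphism is determined by its restriction to the first layer. To check that this extension really is the uniform limit of every subsequence $f_{p,h_n}$ on compact sets, I would compare with an arbitrary subsequential limit $T'$: both $T$ and $T'$ agree on $\exp(V_1)$ by construction, both are $L$-Lipschitz, and both inherit homogeneity $T\circ \delta_s = \delta_s\circ T$ from passing to the limit in $f_{p,h}\circ \delta_s = \delta_s \circ f_{p,hs}$, so they must agree on the subgroup generated by horizontal one-parameter subgroups, which is all of $\G_1$. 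The homomorphism property $T(xy)=T(x)T(y)$ is then forced by passing to the limit in the identity encoding group multiplication, using equi-Lipschitz control. Controlling the commutator behaviour in the higher layers --- ensuring that Lipschitz control in horizontal directions alone really forces differentiability in every direction with the correct Carnot scaling --- is the delicate measure-theoretic and algebraic core of Pansu's argument that I expect to be the principal difficulty.
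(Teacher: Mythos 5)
The paper does not prove this statement at all: it is quoted verbatim as Pansu's differentiability theorem and used as a black box, so there is no in-paper argument to compare yours against. Judged on its own terms, your sketch has the right skeleton (equi-Lipschitz difference quotients via left-invariance and the dilation homogeneity of the Carnot--Carath\'eodory metric, Arzel\`a--Ascoli, a.e.\ metric differentiability along horizontal one-parameter subgroups for a countable dense set of directions, and the fact that a stratified homomorphism is determined by its first layer), but the two steps you defer are precisely where the theorem lives, and one of your bridging arguments is circular as stated.

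Concretely: (i) knowing that $h\mapsto \delta_{1/h}\bigl(f(p)^{-1}f(p\exp(hv_k))\bigr)$ converges for each $k$ gives you a map on rays in $V_1$, but \emph{additivity} --- that the limit of $f$ along the concatenation $\exp(hv)\exp(hw)$ is the product of the limits --- requires differentiability at the \emph{moving} basepoint $p\exp(hv)$, uniformly in $h$; this is supplied in Pansu's proof by an Egorov-type uniformization over a large-measure set together with a density-point argument, and nothing in your sketch replaces it. Without additivity you cannot even assert that $v\mapsto T(v)$ is a group morphism of $V_1$ into $\G_2$, let alone that it ``lifts uniquely to a graded Lie algebra homomorphism'': the compatibility with the relations of the stratified algebra (vanishing of the right bracket expressions in $\G_2$) is part of what must be proved, not a consequence of Baker--Campbell--Hausdorff. (ii) Your uniqueness argument for subsequential limits is gapped twice. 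Passing to the limit in $f_{p,h}\circ\delta_s=\delta_s\circ f_{p,hs}$ along a subsequence $h_n$ only relates the limit along $h_n$ to a limit along $h_ns$, which may a priori be a different subsequential limit; homogeneity of an arbitrary cluster point is not free. And two Lipschitz homogeneous maps that agree on $\exp(V_1)$ need not agree on the subgroup it generates --- that propagation is valid only once you already know both are homomorphisms, which is the conclusion you are trying to reach. So the sketch is an honest roadmap, but the measure-theoretic core (the two-scale/Egorov step forcing additivity) is missing rather than merely delicate, and the paper gives you no shortcut since it simply cites Pansu.
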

  
\begin{remark}
\label{rmk:3}
Observe that if $f_j \colon X\to Y$ are $L$-Lipschitz quotients between metric spaces and are converging uniformly on balls, then the limit is an $L$-Lipschitz quotient. Thus, in Theorem \ref{THM:Pansu2}, the group homomorphism is a Lipschitz quotient if $f$ is a Lipschitz quotient.
\end{remark}

\section{Proof of Theorem \ref{thm:LQ_main}}

By passing to the ultralimit as explained in the previous section, we obtain the following existence result. 
\begin{lemma}
\label{lemma:LQ_to_cone}
Let $f \colon \R^n \to N$ be a Lipschitz quotient  into a closed and connected Riemannian $n$-manifold. Then there exists a Lipschitz quotient  $F\colon \R^n \to \Cone(\pi_1(N))$ to the asymptotic cone of $\pi_1(N)$.
\end{lemma}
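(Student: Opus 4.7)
My plan is to combine the universal-cover lifting lemma with the ultralimit stability of Lipschitz quotients, and then use the \v{S}varc--Milnor quasi-isometry between $\tilde N$ and $\pi_1(N)$ to identify the two asymptotic cones.

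First, I would apply Lemma \ref{lemma:LQ_lift} to obtain a lift $\tilde f \colon \R^n \to \tilde N$ of $f$ to the Riemannian universal cover $\tilde N$ of $N$, which is again an $L$-Lipschitz quotient. Fix a base point $x_0\in \R^n$ (say the origin), use $\tilde f(x_0)$ as the base point of $\tilde N$, a nonprincipal ultrafilter $\omega$, and a sequence of scales $\lambda_j\to 0$. By Corollary \ref{passing:cone}, the induced map
\[
\Cone(\tilde f)\colon \Cone(\R^n,x_0)\longrightarrow \Cone(\tilde N,\tilde f(x_0))
\]
is an $L$-Lipschitz quotient. Because $(\R^n,\lambda_j d)$ is isometric to $(\R^n,d)$ via $x\mapsto \lambda_j x$ (sending $x_0$ to $x_0$), the asymptotic cone $\Cone(\R^n,x_0)$ is canonically isometric to $\R^n$.

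Next, since $N$ is closed, connected and Riemannian, $\pi_1(N)$ acts on $\tilde N$ properly discontinuously, cocompactly, and by isometries. By the \v{S}varc--Milnor lemma, for any choice of a finite generating set and corresponding word metric, the orbit map $\Gamma\to \tilde N$, $\gamma \mapsto \gamma\cdot\tilde f(x_0)$, is a quasi-isometry. Quasi-isometries pass to ultralimits as biLipschitz homeomorphisms, since their additive error disappears under the rescalings $\lambda_j\to 0$; hence one obtains a biLipschitz homeomorphism
\[
\Phi\colon \Cone(\tilde N,\tilde f(x_0))\longrightarrow \Cone(\pi_1(N)).
\]
Defining $F:=\Phi\circ \Cone(\tilde f)\colon \R^n\to \Cone(\pi_1(N))$ finishes the proof, provided we check that the composition of a Lipschitz quotient with a biLipschitz map is again a Lipschitz quotient. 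This is elementary: if $\Phi$ is $C$-biLipschitz and $\Cone(\tilde f)$ is an $L$-Lipschitz quotient, then $F$ is $LC$-Lipschitz, and for any $x\in \R^n$ and $r>0$,
\[
F(B^n(x,r)) = \Phi\bigl(\Cone(\tilde f)(B^n(x,r))\bigr) \supset \Phi\bigl(B(\Cone(\tilde f)(x),r/L)\bigr) \supset B\bigl(F(x),r/(LC)\bigr),
\]
so $F$ is an $LC$-Lipschitz quotient.

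The only point requiring any care is the passage from the \v{S}varc--Milnor quasi-isometry to a biLipschitz identification of asymptotic cones; this is standard but worth stating explicitly, since a priori Corollary \ref{passing:cone} only handles Lipschitz quotients and not arbitrary quasi-isometries. Everything else is a direct assembly of the tools already set up in the previous section.
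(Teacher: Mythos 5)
Your proposal is correct and follows essentially the same route as the paper: lift via Lemma \ref{lemma:LQ_lift}, pass to asymptotic cones via Corollary \ref{passing:cone} using $\Cone(\R^n)\cong\R^n$, and compose with the biLipschitz homeomorphism of cones induced by the \v{S}varc--Milnor quasi-isometry between $\tilde N$ and $\pi_1(N)$. You in fact supply two small verifications the paper leaves implicit (that quasi-isometries induce biLipschitz maps on cones, and that post-composing a Lipschitz quotient with a biLipschitz map yields a Lipschitz quotient with a worse constant), which is welcome but not a different argument.
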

\begin{proof}
Let $\tilde N $ be the universal cover of $N$ and $\tilde f\colon \R^n \to \tilde N$ a lift of $f$. By Lemma \ref{lemma:LQ_lift}, $\tilde f$ is a Lipschitz quotient. 

Then, by Corollary  \ref{passing:cone}, there exists a Lipschitz quotient $\tilde f_\omega \colon \R^n \to \Cone(\tilde N)$; here we use the fact that the asymptotic cone of the Euclidean space $\R^n$ is just $\R^n$ itself.

Let $S$ be a finite symmetric generating set of $\pi_1(N)$. 
Equip $\pi_1(N)$ with the word distance associated to $S$.
Since $N$ is closed, the metric spaces  $\tilde N$ and  $\pi_1(N)$ are quasi-isometric. Thus, by passing to asymptotic cones, we get      a biLipschitz homeomorphism $\phi \colon \Cone(\tilde N) \to \Cone(\pi_1(N))$. Thus  $F=\phi \circ f_\omega\colon \R^n \to \Cone(\pi_1(N))$ is the desired map.
\end{proof}

\begin{proof}[Proof of Theorem \ref{thm:LQ_main}]
Let $N$ be a closed, connected Riemannian $n$-manifold admitting a Lipschitz quotient  $f\colon \R^n \to N$. Then, 
by Lemma \ref{lemma:LQ_to_cone} there
 exists a Lipschitz quotient  $F\colon \R^n \to \Cone(\pi_1(N))$ to the asymptotic cone of $\pi_1(N)$.
  By Theorem \ref{THM:Pansu1}, $\Cone(\pi_1(N))$ is bilipschitz equivalent to a Carnot group $\bG$.

From Theorem \ref{THM:Pansu2}, since $F$ is a Lipschitz map between Carnot groups, we have that,  for almost every $p\in\R^n$, the tangent map $\R^n\to \G$  is a group homomorphism. Let $\phi$ be any of such maps. Since $\phi$ is an ultratangent of a  Lipschitz quotient, it is a Lipschitz quotient by Remark \ref{rmk:3}. In particular, $\phi$ is surjective. Thus $\bG$ is the image under a homomorphism of the Abelian group $\R^n$. Hence, the group $\bG$ is Abelian.
 Thus $\pi_1(N)$ is virtually Abelian by Remark \ref{Abelian:IFF:Abelian}.
\end{proof}

\bibliographystyle{abbrv}
\bibliography{Rigidity_paper}

\end{document}